\numberwithin{equation}{section}
\def\swappedhead#1#2#3{%
  \thmnumber{\@upn{\the\thm@headfont #2\@ifnotempty{#1}{.~}}}%
  \thmname{#1}%
  \thmnote{ {\the\thm@notefont(#3)}}}
\theoremstyle{plain}
\newtheorem{thm}{Theorem}[section]%
\newtheorem{prop}[thm]{Proposition}%
\theoremstyle{definition}
\newtheorem{question}[thm]{Question}%
\newtheorem{defn}[thm]{Definition}%
\newtheoremstyle{claimstyle}%
   {}
   {}
   {\normalfont}
   {}
   {\itshape}
   {.}
   { }
   {\thmnote{#3}}
\theoremstyle{claimstyle}
\newtheorem*{varclaim}{}
\newenvironment{remark}[1][Remark]{\begin{varclaim}[#1]}{\end{varclaim}}
\newenvironment{claim}[1][Claim]{\begin{varclaim}[#1]}{\end{varclaim}}
\newenvironment{subproof}{\begin{proof}}{%
               \end{proof}}
\newcommand*{\defeq}{\mathrel{\vcenter{\baselineskip0.5ex \lineskiplimit0pt
                     \hbox{\scriptsize.}\hbox{\scriptsize.}}}%
                     =}
\renewcommand{\theta}{\vartheta}
\renewcommand{\phi}{\varphi}
\newcommand{\C}{{\mathbb{C}}}
\newcommand{\Ch}{\hat{\C}}
\newcommand{\R}{{\mathbb{R}}}
\newcommand{\classB}{\mathcal{B}}
\newcommand{\deriv}{\mathrm{d}}
\title{Spiders' webs in the Eremenko--Lyubich class}
\begin{document} 

\author{Lasse Rempe} 
\address{Department of Mathematics \\
	 The University of Manchester \\
   Manchester M13 9PL\\
   UK \\ 
	 \textsc{\newline \indent 
	   \href{https://orcid.org/0000-0001-8032-8580%
	     }{\includegraphics[width=1em,height=1em]{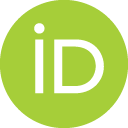} {\normalfont https://orcid.org/0000-0001-8032-8580}}
	       }}
\date{\today}
\email{lasse.rempe@manchester.ac.uk}
\subjclass[2020]{Primary 37F10, Secondary 30D05.}
\keywords{Julia set, escaping set, transcendental entire function, Eremenko--Lyubich class, spider's web}

\begin{abstract}Consider the entire function $f(z)=\cosh(z)$. We show that
the escaping set $I(f)$~-- that is, the set of points whose orbits tend to infinity under
iteration of $f$~-- has a structure known as a ``spider's web''. This disproves 
  a conjecture of Sixsmith from 2020. 
 In fact, we show that the \emph{fast escaping set} $A(f)$, 
 i.e.\ the subset of $I(f)$ consisting of points whose orbits tend to infinity at an
 iterated exponential rate, is a spider's web. This answers a question of 
 Rippon and Stallard from 2012. We also discuss a wider class of functions to which our results 
 apply, and state some open questions.
  \end{abstract}

\maketitle

\section{Introduction}
 Let $f\colon \C\to\C $ be a transcendental entire function. The set 
    \[ I(f)\defeq \{z\in \C\colon \lim_{n\to\infty} f^n(z) = \infty\} \]
  is called the \emph{escaping set} of $f$, where 
     \[ f^n = \underbrace{f\circ\dots\circ f}_{n\text{ times}}\] 
    denotes the $n$-th iterate of $f$. 
    
   The \emph{fast escaping set} $A(f)\subset I(f)$ consists
     of those points for which $\lvert f^n(z)\rvert $ tends to infinity
    at the fastest possible rate. More precisely, let 
       \[ M(R)\defeq M(R,f)\defeq \max_{\lvert z\rvert \leq R}
          \lvert f(z)\rvert \] denote the maximum modulus function of $f$, and
          write $M^n(R)=M^n(R,f)$ for the $n$-th iterate of this function.
          If $R$ is sufficiently large, then $M(r)>r$ for all $r\geq R$; it follows that
          $M^n(R)\to\infty$. Let us fix such an $R$, and define 
      \[ A_R(f) \defeq \{z\in\C \colon \lvert f^n(z)\rvert \geq M^n(R) \text{ for all $n\geq 0$}\} \]
          and 
        \[ A(f) \defeq \bigcup_{n\geq 0} f^{-n}(A_R(f)). \]
      It can be shown that the set $A(f)$ is independent of the choice of $R$;
        see~\cite[Theorem~2.2]{fastescapingpoints}. 
        
   The escaping set and the fast escaping set have been central objects of study in transcendental dynamics 
    in the past decades; see~\cite{escapingsetsurvey} for a survey on the escaping set, its history and its properties. 
    One of the reasons that the escaping set is useful in the study of transcendental
    dynamics is that it often contains structures that can be used to
    facilitate the study of the overall dynamics. 
    In particular, Rippon and Stallard discovered that, for a large class of entire functions $f$
    (in particular, for many functions of \emph{small growth}), the escaping set, and even 
    $A_R(f)$, has the
    structure of a ``spider's web''. (See~\cite[Theorem~1.9]{fastescapingpoints} for
    a precise statement, and~\cite[\S7.4]{escapingsetsurvey} for further discussion.) 
    
 \begin{defn}[{\cite[Definition~1.2]{fastescapingpoints}}]\label{defn:spidersweb}
   A set $E\subset\C$ is called a \emph{spider's web} if $E$ is connected, and 
    if there exists an increasing
    sequence $(G_n)_{n=0}^{\infty}$ of simply-connected domains with
    $\bigcup_{n=0}^{\infty} G_n =\C$ and $\partial G_n\subset E$ for all $n\geq0$. 
 \end{defn}

 We note that the condition that $A_R(f)$ is a spider's web is independent of the
   choice of $R$; see~\cite[Lemma~7.1]{fastescapingpoints}. Moreover,
   if $A_R(f)$ is a spider's web, then so is $A(f)$, and if $A(f)$ is a spider's
   web, then so is $I(f)$ 
   (see Theorem~1.4  of~\cite{fastescapingpoints} and the remark after its proof). 
              

  The \emph{Eremenko--Lyubich class} consists of those entire functions $f$
    for which the set of critical and asymptotic values is bounded; 
    see~\cite{EremenkoLyubich1992} and~\cite{sixsmith-survey}. 
    If $f\in\classB$, then $f$ is bounded on a curve tending to infinity, and hence
    $A_R(f)$ is never a spider's web (with $R$ as above);
    see~\cite[Theorem~1.8]{fastescapingpoints}. In particular, 
    $A_R(f)$ does not separate the plane (see \cite[Theorem~1.4]{fastescapingpoints}). It can be shown that
    $A_R(f)$ has uncountably many connected components for sufficiently large $R$ 
    (this follows e.g.\ from~\cite[Proposition~8.1]{arclike} and~\cite[Theorem~1.1]{rigidity}; we omit the details). 
    For many $f\in\classB$, even $I(f)$ has uncountably many
    connected components~\cite[Corollary~1.6]{geometricallyfinite}. 
    On the other hand, there are also many values
    of $a$ for which the escaping set of the exponential map 
    $E_a(z) \defeq e^z+a$ is connected~\cite{escapingconnected}.
    However, $I(E_a)$ is never a spider's web. Indeed, 
    $a$ is a logarithmic asymptotic value of $E_a$, and 
    Sixsmith~\cite[Theorem~1.4]{Sixsmith2020} 
    proved that
    $I(f)$ is never a spider's web if $f\in\classB$ has a finite logarithmic asymptotic value.
    
    In view of these results,
     Sixsmith~\cite[Conjecture after Theorem~1.4]{Sixsmith2020} conjectured that $I(f)$ 
    is not a spider's web for any $f\in\classB$. We disprove this conjecture, even for the fast escaping set $A(f)$. 
    
    \begin{thm}[A spider's web escaping set in the Eremenko--Lyubich class]\label{thm:main}
      The set $A(\cosh)$ is a spider's web. 
    \end{thm}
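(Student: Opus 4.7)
The plan is to construct an increasing sequence of simply-connected bounded domains $G_n \uparrow \C$ with $\partial G_n \subset A(\cosh)$. As a preliminary, I would show that several natural curves lie in $A(\cosh)$. The real axis $\R$ is in $A(\cosh)$: for $x \in \R$ the orbit is monotonically increasing (since $\cosh(t) > t$ for $t \geq 0$) and eventually enters $[R, \infty)$, and monotonicity of $\cosh$ on $[0, \infty)$ gives $\cosh^n(x) \geq M^n(R)$ for $x \geq R$. All horizontal lines $\R + ik\pi$, $k \in \Z$, are in $A(\cosh)$, since $\cosh(z + ik\pi) = (-1)^k \cosh(z)$ reduces these to the real-axis case after one iterate. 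And the imaginary axis $i\R$ is in $A(\cosh)$, since $\cosh(iy) = \cos y \in [-1, 1] \subset \R$.

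Since $A(\cosh)$ is backward-invariant under $\cosh$, the iterated preimages $\bigcup_{n \geq 0} \cosh^{-n}(\R \cup i\R)$ all lie in $A(\cosh)$. Direct computation produces many further explicit curves: $\cosh^{-1}(i\R)$ equals the family of horizontal lines $\{\im z = (k+\tfrac{1}{2})\pi\}_{k \in \Z}$; $\cosh^{-1}(\R + ik\pi)$ equals the ``U-shaped'' curves $\{\sinh(\re z)\sin(\im z) = k\pi\}$ running through each horizontal strip; and deeper preimages yield still further curves. The symmetries $z \mapsto -z$ (since $\cosh$ is even) and $z \mapsto z + i\pi$ (since $\cosh(z + i\pi) = -\cosh z$) both preserve $A(\cosh)$, arranging these curves symmetrically on both sides of the imaginary axis.

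The key step is to assemble these curves into closed Jordan loops $\Gamma_n$ encircling the origin at arbitrarily large scales: horizontal segments at $\im z = \pm N\pi$ would serve as the top and bottom of $\Gamma_n$, and U-shape curves (or their further preimages) as the left and right edges. This assembly is the main obstacle. A naive rectangular loop with straight vertical sides fails, because vertical segments $\{\re z = c, c \neq 0\}$ generically contain points with bounded (or merely slowly escaping) orbit and are not in $A(\cosh)$. One must therefore use curves from the iterated preimage structure for the ``left and right'' edges, and the combinatorial argument showing that these pieces close up into genuine Jordan loops at every scale, bounding simply-connected domains $G_n$ that exhaust $\C$, is the core technical content.
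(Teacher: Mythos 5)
Your proposal takes a genuinely different, direct route from the paper, but it has a gap at precisely the step that matters. Your preliminary observations are correct: $\R \subset A(\cosh)$, the lines $\R + ik\pi$ and $i\R$ map into $\R$ after one iterate, backward invariance puts all iterated preimages of these curves in $A(\cosh)$, and the explicit formulas $\cosh^{-1}(i\R) = \{\im z = (k+\tfrac12)\pi\}$ and $\cosh^{-1}(\R + ik\pi) = \{\sinh(\re z)\sin(\im z) = k\pi\}$ are right. The paper makes the same elementary observation that $\R \subset A(\cosh)$ and even remarks (Section~\ref{sec:generalisations}) that the real axis and its iterated preimages form a dense path-connected subset of $A(\cosh)$.

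But the ``core technical content'' you flag --- showing that these unbounded preimage curves assemble into Jordan loops at arbitrarily large scales --- is exactly what you have not supplied, and there is good reason to believe it is hard. Each ``U-shaped'' curve $\{\sinh x\sin y = k\pi\}$ (with $k\neq 0$) opens toward $\re z = +\infty$, is asymptotic to the lines $\im z = 0$ and $\im z = \pi$ but never meets them, and so does not close up against the horizontal segments you want to use for the top and bottom of $\Gamma_n$. You would need infinitely many pieces from deeper and deeper preimages, together with a nontrivial combinatorial argument that they fit together into a bounded Jordan curve --- and, more subtly, that this can be done \emph{at every scale}. Note that $A_R(\cosh)$ is \emph{not} a spider's web (true for every $f\in\classB$, since $f$ is bounded on a curve to infinity), so the loops cannot sit inside any single $A_R$; the depth of the preimages used must increase with the scale. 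Nothing in your sketch controls this. In fact, carrying out your plan would establish that the $\partial G_n$ can be taken to be Jordan curves, which would yield path-connectedness of $A(\cosh)$ and $I(\cosh)$ --- something the paper explicitly leaves as an open question. You have, in effect, reduced the theorem to a strictly harder open problem.

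The paper's proof avoids this combinatorics entirely. It uses Sixsmith's criterion (Theorem~\ref{thm:sixsmith}): $A(f)$ is a spider's web if and only if it separates some point of $J(f)$ from infinity, i.e.\ if and only if $\Ch\setminus A(f)$ is disconnected. One then does not need to exhibit any explicit loops. The disconnectedness of $\Ch\setminus A(\cosh)$ is obtained by constructing (Proposition~\ref{prop:Acoshcomplement}), via Pardo-Sim\'on's semiconjugacy between $\cosh$ and the disjoint-type model $g(z)=\cosh(z)/2$, a continuous surjection $\Ch\setminus A(\cosh)\to\hat J(g)\setminus A(g)$, and then quoting Evdoridou--Sixsmith's theorem that $\hat J(g)\setminus A(g)$ is (totally) disconnected. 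If you want an elementary, self-contained route, the first gap to close is proving the separation property without building explicit Jordan curves; as written, your argument does not establish the theorem.
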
 
    The function $\cosh$ has critical values $1$ and $-1$, and no asymptotic values;
    hence it belongs to the class $\classB$. 
       Theorem~\ref{thm:main} 
   also provides the first example of a function for which $A(f)$ is a spider's web,
    but $A_R(f)$ is not. This answers a question of Rippon and Stallard~\cite[p.807, Question~2]{fastescapingpoints}. As we briefly discuss in Section~\ref{sec:generalisations},
    the result holds, with the same proof, for a larger class of transcendental entire functions.
    In particular, the same result is true with $\cosh$ replaced by
    $z\mapsto (\cosh z)^2$, which was studied in~\cite[Section~5]{fastescapingpoints}
    and~\cite[Example~5.10]{pardo-simon-cosine}. 
    
    The idea of the proof of Theorem~\ref{thm:main} is as follows.
     It follows from an observation by Sixsmith (see Theorem~\ref{thm:sixsmith}) that $A(f)$ is a spider's web if 
     and only if $\Ch\setminus A(f)$ is disconnected, where $\Ch=\C\cup\{\infty\}$ denotes
     the Riemann sphere. Pardo-Sim\'on~\cite{pardo-simon-cosine} has given 
     a topological model for the dynamics of the map $h(z)\defeq \cosh(z)$ in terms of the simpler
     ``disjoint-type'' function $g(z) \defeq \cosh(z)/2$, whose dynamics is well-understood. 
      In particular, it can be deduced from her work that there is a 
     continuous and surjective map $\Ch\setminus A(h) \to (J(g)\setminus A(g))\cup\{\infty\}$; see Proposition~\ref{prop:Acoshcomplement}. It was previously shown by Evdoridou and Sixsmith~\cite{nonescapingendpoints} that
     the latter set is disconnected; in fact, the union of $A(g)$ with 
     the Fatou set $F(g)$ is a spider's web. 
     
  (Recall that the \emph{Fatou set} $F(f)\subset \C$ of an entire function $f$ is the set of points where the family of iterates 
    $(f^n)_{n=0}^{\infty}$ is
    equicontinuous with respect to the spherical metric on $\Ch$; intuitively, it is the set of points where the dynamics is
    stable under small perturbations. The complement $J(f) \defeq \C\setminus F(f)$ is called the \emph{Julia set}.
    For $f\in\classB$, we always have $J(f) = \overline{I(f)}=\overline{A(f)}$; see Theorems~5.5, 6.1 and~6.2 of~\cite{escapingsetsurvey}.)

   For the map $g$, the set 
     $F(g)$ is an unbounded connected set, consisting of the immediate basin of an attracting fixed point. In particular,
      the set $A(g)\cup F(g)$ is not a subset of the escaping set, and $A(g)$ and $I(g)$ are not spiders' webs. On the other hand,
      we have $J(h)=\C$ (see Section~\ref{sec:main}), and hence $I(h)$ and $A(h)$ are dense in $\C$. 
     
   \subsection*{Acknowledgements} I thank 
   Walter Bergweiler, Vasiliki Evdoridou, Leticia Pardo-Sim\'on, Phil Rippon, Dave Sixsmith and Gwyneth
    Stallard for interesting discussions about spider's web escaping sets and
    Theorem~\ref{thm:main}.

\section{Iterated exponential growth}
 For a large class of transcendental entire functions, 
   including exponential and trigonometric functions, 
   points in the fast escaping set are characterised as those
   whose orbits exhibit \emph{iterated exponential growth}. Here we briefly review
   this fact and the properties of iterated exponential growth for the reader's
   convenience. 
   
  Let us define $F\colon [0,\infty)\to [0,\infty); t\mapsto \exp(t)-1$.  Then $F(t)>t$ for $t>0$, and hence 
    the sequence $F^n(t)$ tends to infinity. We are interested in the rate at which 
    these orbits
    grow. 
   
 \begin{defn}[Iterated exponential growth]
   A sequence $(a_n)_{n=0}^{\infty}$ of non-negative real numbers has
     \emph{iterated exponential growth} if
        \[ 0 < \liminf_{n\to\infty} F^{-n}(a_n) \leq \limsup_{n\to\infty} F^{-n}(a_n) < \infty.\]
 \end{defn}
 
 The specific function $F$ is not relevant here; 
  any exponentially growing function gives rise to the same
  notion of iterated exponential growth:
 
 \begin{prop}[Elementary properties of iterated exponential growth]\label{prop:exponentialgrowth}
   \mbox{} \begin{enumerate}[(a)]
     \item Let $\delta>0$ and define $\Omega_\delta(t)\defeq \exp(\delta t)$ for $t\in\R$.
       Let $t_0$ be such that $\Omega_{\delta}(t)>t$ for $t\geq t_0$. Then the sequence
       $(\Omega_{\delta}^n(t_0))_{n=0}^{\infty}$ has iterated exponential growth.
       \label{item:differentexponentials}
    \item Let $C>1$. A sequence $(a_n)_{n=0}^{\infty}$ has iterated
        exponential growth if and only if the sequence $\bigl(a_n^C\bigr)_{n=0}^{\infty}$ has iterated
        exponential growth.\label{item:powergrowth}
   \end{enumerate}
 \end{prop}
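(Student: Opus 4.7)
My plan is to prove each part by direct asymptotic analysis of $F^{-n}$ applied to the relevant sequences.

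For part~(a), the plan is to show that $F^{-n}(\Omega_\delta^n(t_0))$ is bounded above and below by positive constants for all sufficiently large $n$. The starting point is the explicit identity
\[ F^{-1}(\Omega_\delta(s)) = \log(e^{\delta s}+1) = \delta s + \log(1+e^{-\delta s}), \]
which I would combine with $F^{-1}(y)=\log(y+1)=\log y + O(1/y)$ iteratively to establish, by induction on $k$, an asymptotic expansion of the form
\[ F^{-k}(\Omega_\delta^n(t_0)) = \log\delta + \delta\,\Omega_\delta^{n-k}(t_0) + E_{n,k} \qquad (2\le k\le n), \]
where $|E_{n,k}|$ is bounded by a convergent sum of terms of the form $1/\Omega_\delta^{j}(t_0)$. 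Taking $k=n$ yields $F^{-n}(\Omega_\delta^n(t_0)) = \log\delta + \delta t_0 + E_{n,n}$, which is bounded in $n$. Since $\Omega_\delta^n(t_0)\to\infty$, I can, if necessary, pass to a tail (replacing $t_0$ by $\Omega_\delta^N(t_0)$ for a large $N$) to ensure that the limiting value is strictly positive.

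For part~(b), the plan is to show that $F^{-n}(a_n^C) - F^{-n}(a_n) \to 0$ as $n\to\infty$, noting that iterated exponential growth forces $a_n\to\infty$ (since $a_n\geq F^n(c)$ for some $c>0$). This transfers iterated exponential growth between the two sequences in both directions. The key estimates are $F^{-1}(y^C) = C\log y + O(y^{-C})$ and $F^{-2}(y^C) = F^{-2}(y) + \log C + o(1)$ as $y\to\infty$; one further application of $F^{-1}$ absorbs the additive $\log C$ via $F^{-1}(v+\log C)-F^{-1}(v)=\log(1+\log C/(v+1))\to 0$, so $F^{-k}(y^C)-F^{-k}(y) = o(1)$ for $k\geq 3$ as $y\to\infty$. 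Setting $y=a_n$ then yields the claim for $n\geq 3$. The reverse implication follows by applying the forward direction with $C$ replaced by $1/C$.

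The main technical obstacle in both parts is bookkeeping errors from repeated applications of $F^{-1}$. This is manageable because $(F^{-1})'(y)=1/(y+1)$ is very small for large $y$, so additive perturbations are sharply damped at each further step; errors stay uniformly bounded rather than compounding.
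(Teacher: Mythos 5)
Your proposal is correct, but it takes a genuinely different route from the paper's proof. For part~(a), the paper does not compute asymptotics of $F^{-n}(\Omega_\delta^n(t_0))$ at all: it instead sandwiches the $\Omega_\delta$-orbit between $\Omega_1$- and $\Omega_{1/2}$-orbits, using the pointwise inequalities $\Omega_{1/2}(t)<F(t)<\Omega_1(t)$ together with an external comparison lemma (\cite[Lemma~3.4]{devaneyhairsfastescaping}) asserting that two iterated exponentials dominate one another once their starting points are suitably adjusted; this yields bounds of the form $F^{-k}(1)\le F^{-n}(\Omega_\delta^n(t_0))<T$ with no tail-passing needed. Your approach, a direct asymptotic expansion of $F^{-n}$ along the orbit, is more computational but self-contained; the trade-off is that the error bookkeeping (your $E_{n,k}$ recursion) needs care~--- the error satisfies a recursion of the form $|E_{n,k+1}|\lesssim(1+|E_{n,k}|)/\Omega_\delta^{n-k}(t_0)$, so what you get is a sum of \emph{products} of reciprocals rather than a sum of single reciprocal terms, though the conclusion (uniform boundedness, and smallness after a tail shift) is still correct. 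For part~(b), the paper's argument is a one-line sandwich: since $a_n\le a_n^C\le F(a_n)$ for large $a_n$, monotonicity of $F^{-n}$ traps $F^{-n}(a_n^C)$ between $F^{-n}(a_n)$ and $F^{-(n-1)}(a_n)$, and the shift invariance of iterated exponential growth finishes it. Your argument instead establishes the stronger quantitative statement $F^{-n}(a_n^C)-F^{-n}(a_n)\to 0$, exploiting the $1$-Lipschitz property of $F^{-1}$ on $[0,\infty)$ to propagate the $o(1)$ bound from $k=3$ to $k=n$; note that once you have this, the equivalence is automatic and the remark about replacing $C$ by $1/C$ is superfluous. Both proofs are valid; the paper's is shorter and leans on a cited lemma, while yours is longer but entirely elementary and gives sharper information.
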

 \begin{proof}
   Observe that $\Omega_1(t)>F(t)$ for all $t\geq 0$. Let $t_0$ and 
   $\delta$ be as in~\ref{item:differentexponentials}. There 
   is $T\geq t_0$ such that $\Omega_{\delta}^n(T) > \Omega_1^n(1) > F^n(1)$ for $n\geq 0$
     (see e.g.~\cite[Lemma~3.4]{devaneyhairsfastescaping}). Fix $k$ so large that
     $\Omega_\delta^k(t_0) \geq T$; then 
       \[ F^{-n}(\Omega_\delta^n(t_0)) \geq F^{-n}(\Omega_{\delta}^{n-k}(T)) 
            \geq F^{-k}(1)>0 \] 
      for $n\geq k$. Similarly, $\Omega_{1/2}(t)<F(t)$ for $t\geq 1$. Taking
       $T\geq 1$ such that $\Omega_{1/2}^n(T)> \Omega_{\delta}^n(t_0)$ for $n\geq 0$, 
       we see that 
       \[ \limsup F^{-n}(\Omega_{\delta}^n(t_0)) < T < \infty. \]

    To prove part~\ref{item:powergrowth}, observe that $(a_n)_{n=0}^{\infty}$ has
      iterated exponential growth if and only if $(F(a_n))_{n=0}^{\infty}$ does, and 
      the latter sequence is larger than $a_n^C$ when $\lvert a_n\rvert$ is sufficiently
      large. 
  \end{proof} 
    
   It follows easily that, for a trigonometric function such as $h(z) = \cosh z$ or
    $g(z) = \cosh(z)/2$ (the two functions that will play an important role in the 
    proof of Theorem~\ref{thm:main}), a point $z$ is in the fast escaping set if and only if
    its orbit has iterated exponential growth. 
 
    This is true far more generally. 
    A transcendental entire function has \emph{finite order} if there is a constant $C>0$
     such that
       \begin{equation}\label{eqn:order} M(r,f) \leq \exp( r^C) \end{equation}
     for all sufficiently large $r$. It has \emph{positive lower order} if there is a constant
       $c>0$ such that 
       \begin{equation}\label{eqn:lowerorder} M(r,f) \geq \exp(r^c) \end{equation}
       for all sufficiently large $r$. 
  \begin{prop}[Fast escaping points of finite-order functions]\label{prop:classBgrowth}
    If $f$ has finite order and positive lower order, then 
      $z\in A(f)$ if and only if $\lvert f^n(z)\rvert$ has iterated exponential growth.
      In particular, this holds whenever $f\in\classB$ has finite order. 
  \end{prop}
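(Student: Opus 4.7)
The plan is to reduce the equivalence to the single auxiliary claim that $(M^n(R))_{n\geq 0}$ itself has iterated exponential growth for every sufficiently large $R$. Once this is in hand, the forward direction is a sandwich argument: if $z\in A(f)$ and $k$ is chosen so that $f^k(z)\in A_R(f)$, then the maximum modulus principle gives
\[ M^n(R)\leq\lvert f^{n+k}(z)\rvert\leq M^n(\lvert f^k(z)\rvert) \]
for every $n\geq 0$. Both bounding sequences have iterated exponential growth by the auxiliary claim, so the sandwiched sequence does too, and invariance of iterated exponential growth under finite shifts of index transfers this to $(\lvert f^n(z)\rvert)$. For the converse, iterated exponential growth of $(\lvert f^n(z)\rvert)$ furnishes some $\alpha'>0$ and $n_0$ with $\lvert f^n(z)\rvert\geq F^n(\alpha')$ for all $n\geq n_0$, while the auxiliary claim supplies $\beta$ with $M^n(R)\leq F^n(\beta)$ for all large $n$; choosing $k$ so that $F^k(\alpha')\geq \beta$ and then enlarging $k$ further to absorb the finitely many remaining indices (possible because $\lvert f^n(z)\rvert\to\infty$) yields $\lvert f^{n+k}(z)\rvert\geq M^n(R)$ for every $n\geq 0$, hence $z\in A(f)$.

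To establish the auxiliary claim, I would combine the two-sided bound $\exp(r^c)\leq M(r)\leq \exp(r^C)$, valid for large $r$ by \eqref{eqn:order} and \eqref{eqn:lowerorder}, with both parts of Proposition~\ref{prop:exponentialgrowth}. Setting $\Psi(r)\defeq\exp(r^C)$ and $\pi_n\defeq \Psi^n(R)^C$, a direct induction shows that $\pi_n=\Omega_C^n(R^C)$, which has iterated exponential growth by Proposition~\ref{prop:exponentialgrowth}\ref{item:differentexponentials}; part~\ref{item:powergrowth} then transfers the property back to $(\Psi^n(R))$, giving $\limsup F^{-n}(M^n(R))<\infty$. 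The symmetric argument with $\Phi(r)\defeq\exp(r^c)$ and $\delta=c$ produces $\liminf F^{-n}(M^n(R))>0$. The ``in particular'' clause then follows from the classical fact, essentially a consequence of the Denjoy--Carleman--Ahlfors theorem applied to the direct tracts of functions in $\classB$, that every $f\in\classB$ has lower order at least $1/2$ and so positive lower order.

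The one step that requires genuine care is the upper bound above in the case $C>1$: here $M(r)$ grows faster than every $\Omega_\delta(r)$, so the naive comparison $M^n(R)\leq \Omega_\delta^n(R)$ simply fails. Part~\ref{item:powergrowth} of Proposition~\ref{prop:exponentialgrowth} is therefore used in an essential way, first raising the sequence to the power $C$ so that the iteration becomes the purely $\Omega_C$-exponential recursion $\pi_n=\exp(C\pi_{n-1})$, in which $C$ appears as a multiplicative, rather than an exponentiating, constant.
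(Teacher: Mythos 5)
Your proof is correct and takes essentially the same route as the paper: the auxiliary claim that $(M^n(R))$ has iterated exponential growth is precisely the content of the paper's displayed bounds $M^n(r)\leq(\Omega_C^n(r^C))^{1/C}$ and $M^n(r)\geq(\Omega_c^n(r^c))^{1/c}$, obtained by the same power-raising trick, and the remainder is the routine sandwich argument that the paper compresses into ``the claim follows from Proposition~\ref{prop:exponentialgrowth} and the definition of $A(f)$.'' One small slip in attribution: the inequality $M^n(R)\leq\lvert f^{n+k}(z)\rvert$ is simply the definition of $A_R(f)$, not a consequence of the maximum modulus principle (which gives only the right-hand bound $\lvert f^{n+k}(z)\rvert\leq M^n(\lvert f^k(z)\rvert)$).
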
 
  \begin{proof}
     If~\eqref{eqn:order} holds and $z\in\C$, then 
       \[ \lvert f^n(z)\rvert \leq M^n(r,f) \leq  (\Omega_C^n(r^C))^{1/C} \] 
   for sufficiently large $r$. Similarly, if~\eqref{eqn:lowerorder} holds, then
      \[ M^n(r,f) \geq (\Omega_c^n(r^c))^{1/c}. \] 
      So the claim follows from Proposition~\ref{prop:exponentialgrowth} and the definition of $A(f)$. 
   
  It is well-known that functions in the Eremenko--Lyubich class have positive lower order; see e.g.\ 
     \cite[Corollary~1.2]{eremenko-lyubich-constant}. This implies the final claim. 
  \end{proof} 

  \section{Proof of the theorem}\label{sec:main}
   Define the entire functions $h$ and $g$ by $h(z) \defeq \cosh(z)$ and $g(z) \defeq \cosh(z)/2$. 
    We begin with some observations. 
    \begin{enumerate}[(a)]
      \item Both functions belong to the Eremenko--Lyubich class. Indeed,
         $h(z) = (e^z+e^{-z})/2$ is the composition of the exponential map $z\mapsto e^z$ and the
         rational map $w\mapsto (w+1/w)/2$ (the \emph{Joukouwsky map}). 
         The former has no critical values and an omitted asymptotic value
         at $0$, while the latter has critical values $1$ and $-1$, and maps $0$ to $\infty$. It follows that 
         $h$ has critical values $1$ and $-1$ and no finite asymptotic values, while
         $g$ has critical values $1/2$ and $-1/2$ and no finite asymptotic values. 
       \item Clearly both functions have finite order in the sense of~\eqref{eqn:order}. 
       \item The map $g$ has two real fixed points $p_a<p_r$,
         with $p_a\approx 0.589$ attracting and $p_r\approx  2.127$ repelling. 
         The interval $(-p_r,p_r)$ belongs to the immediate basin of attraction of
         $p_a$; in particular both critical values belong to this basin. 
         Hence
         $g$ is of ``disjoint type'' in the sense of~\cite[Remark after Definition~2.2]{rigidity};
         see~\cite[Proposition~2.8]{mihaljevic-semiconjugacies}. 
       \item Both critical values of $h$ belong to the real axis. Points on the
         real axis escape to $+\infty$, and $h(x)$ has exponential growth as $x\to\infty$. 
         So $\R\subset A(h)$ by 
         the previous section. It follows that the Julia set of $h$ is the entire complex plane
         (see Theorems~7, 12 and~15 in \cite{waltersurvey}). 
    \end{enumerate}
  The dynamics of the function $g$ is well-understood.
    We refer to~\cite[\S2]{escapingsetsurvey}
    for an elementary discussion of the dynamics of $z\mapsto \sin(z)/2$, which 
     was studied already by Fatou~\cite{fatou}, and which is known to be 
     topologically conjugate to $g$. We require the 
     following fact regarding the complement of $A(g)$. Denote the
     closure of $J(g)$ in $\Ch$ by 
     \[ \hat{J}(g) \defeq J(g)\cup\{\infty\}. \] 
 
  \begin{thm}[The complement of $A(g)$]\label{thm:disjointtypedisconnected}
    The set $\hat{J}(g)\setminus A(g)$ is disconnected. 
  \end{thm}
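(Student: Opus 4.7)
The plan is to read the theorem as an essentially immediate consequence of the Evdoridou--Sixsmith fact, recalled in the introduction, that $E \defeq A(g)\cup F(g)$ is a spider's web. First I would rewrite the target set as a complement in $\Ch$: since $J(g)=\C\setminus F(g)$,
\[ \hat J(g)\setminus A(g) \;=\; (J(g)\setminus A(g))\cup\{\infty\} \;=\; (\C\setminus E)\cup\{\infty\} \;=\; \Ch\setminus E, \]
so the task reduces to showing that $\Ch\setminus E$ is disconnected.

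Next I would extract a separation from the spider's web structure of $E$. By Definition~\ref{defn:spidersweb}, there is an increasing sequence of bounded simply connected domains $G_n\subset\C$ with $\bigcup_n G_n=\C$ and $\partial G_n\subset E$. A concrete witness for non-triviality of $\Ch\setminus E$ is the repelling fixed point $p_r\approx 2.127$ of $g$: it lies in $J(g)$ and is plainly non-escaping, so $p_r\in J(g)\setminus A(g)\subset \Ch\setminus E$. Choosing $N$ with $p_r\in G_N$, the compact set $\partial G_N$ is contained in $E$ and therefore is disjoint from $\Ch\setminus E$ and bounded away from $\infty$. Hence
\[ \Ch\setminus E \;\subset\; G_N \;\sqcup\; (\Ch\setminus\overline{G_N}), \]
a partition of $\Ch\setminus E$ by two disjoint open subsets of $\Ch$, each of which meets it (the first contains $p_r$, the second contains $\infty$). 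This disconnects $\hat J(g)\setminus A(g)$.

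Essentially all the work in the proof lies in invoking the Evdoridou--Sixsmith theorem; once that input is in hand the argument is purely topological, and I expect no serious obstacle. The one small point to watch is that the spider's web exhaustion must produce \emph{bounded} $G_n$ (as in the original Rippon--Stallard formulation), since this is what forces $\partial G_N$ to be bounded and thereby genuinely separates $\infty$ from $p_r$; this is the intended reading of Definition~\ref{defn:spidersweb}. A parallel route, which avoids even this small check, would be to apply the general form of Sixsmith's observation (Theorem~\ref{thm:sixsmith}) directly to the spider's web $E$ to conclude disconnectedness of $\Ch\setminus E$.
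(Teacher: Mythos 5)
Your proof is correct, but it takes a genuinely different route than the paper. The paper invokes~\cite[Theorem~1.1(b)]{nonescapingendpoints} in its \emph{totally separated} form: for any disjoint-type $f$ of finite order, $\hat J(f)\setminus A(f)$ is totally separated, so once one notes that this set contains at least two points (namely $\infty$ and any repelling periodic point), disconnectedness is immediate from the definition of total separation~-- no topology beyond that is needed. You instead invoke the companion statement (mentioned in the introduction) that $E=A(g)\cup F(g)$ is a spider's web, rewrite $\hat J(g)\setminus A(g)=\Ch\setminus E$, and manually extract a separation from the spider's-web exhaustion. Both are consequences of the same Evdoridou--Sixsmith theorem and both work; the paper's version is shorter because ``totally separated plus two points'' is already ``disconnected'', whereas your version requires the small topological argument with $G_N$ and $\partial G_N$. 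Your worry about boundedness of the $G_n$ is correctly resolved: the paper explicitly cites~\cite[Definition~1.2]{fastescapingpoints}, where boundedness is part of the definition. One small correction to your closing remark: Theorem~\ref{thm:sixsmith} as stated in this paper is a biconditional specifically about $A(f)$ and $I(f)$, not about an arbitrary spider's web such as $E=A(g)\cup F(g)$, so it cannot be applied ``directly to $E$''; your primary argument (the manual separation) is the one that actually closes the gap.
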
 
  \begin{proof}
    This follows from a theorem of Evdoridou and Sixsmith \cite[Theorem~1.1(b)]{nonescapingendpoints}, 
     which generalises
     a result of Evdoridou and the author for exponential maps~\cite{EvdoridouRempe2018}. They prove that, for any disjoint-type
     entire function $f$ of finite order, the set
      $X \defeq \hat{J}(f) \setminus A(f)$ is \emph{totally separated},
      which means that for any two points $z,w\in X$, there exists an open and closed
      subset $U$ of $X$ with $z\in U$ and $w\notin U$. The Julia set $J(g)$ is non-empty and contains a dense set 
      of periodic points (see~\cite[Theorems~3 and~4]{waltersurvey}). In particular, 
      $X\setminus\{\infty\} = J(g)\setminus A(g)\neq\emptyset$, and  $X$ is disconnected.
  \end{proof} 

 In~\cite{pardo-simon-splitting} and~\cite{pardo-simon-cosine}, Pardo-Sim\'on
  shows that the dynamics of $h$ on $J(h)=\C$ can be understood using the dynamics 
  of $g$ on its Julia set. More precisely, she modifies
  $J(g)$, and the dynamics of $g$ thereon, in an appropriate manner 
    to give a complete model for the topological dynamics of $\cosh$, 
    and indeed for a much more general class of functions.
    We do not describe the construction directly, but 
    will explain how Pardo-Sim\'on's results and their proofs imply
    the following key fact. 

\begin{prop}[The complement of $A(h)$]\label{prop:Acoshcomplement}
 There exists a continuous and surjective function  
   $\psi\colon \Ch\setminus A(h) \to  \hat{J}(g)\setminus A(g)$ 
   with $\psi(z) =\infty$ if and only if $z=\infty$. 
\end{prop}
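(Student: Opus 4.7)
The plan is to appeal to Pardo-Sim\'on's topological modelling theorems for cosine-type maps in \cite{pardo-simon-splitting, pardo-simon-cosine}, which relate the dynamics of $h$ on $\Ch$ to those of the disjoint-type function $g$ on its Julia set. Her framework supplies a natural continuous surjective semiconjugacy $\pi\colon\hat{J}(g)\to\Ch$ with $\pi\circ g=h\circ\pi$, $\pi(\infty)=\infty$, and $\pi^{-1}(\infty)=\{\infty\}$. The desired map $\psi$ will be extracted by ``inverting'' $\pi$ on the complement of the fast escaping set.

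As a first step, I would verify that $\pi$ respects the splitting induced by the fast escaping set: concretely, $\pi(A(g))=A(h)$ and $\pi^{-1}(A(h))=A(g)$. Using Proposition~\ref{prop:classBgrowth}, fast escape is characterised purely by iterated exponential growth of $|f^n(z)|$, and this property is preserved and reflected under the intertwining relation $\pi\circ g=h\circ\pi$ provided $\pi$ is close to the identity in suitable logarithmic coordinates outside a bounded region. Since $g$ and $h$ share the same tract structure at infinity, such an asymptotic coordinate identification is built into Pardo-Sim\'on's construction. Consequently, $\pi$ restricts to a continuous surjection $\hat{J}(g)\setminus A(g)\to\Ch\setminus A(h)$.

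To produce $\psi$ in the opposite direction, I would show that this restricted $\pi$ is actually injective, so that its set-theoretic inverse is the desired $\psi$. Pardo-Sim\'on's analysis identifies the non-injectivity of $\pi$ as arising from \emph{pinching}: distinct external addresses in the disjoint-type model whose hairs must be glued at common landing points in $\Ch$ in order to recover the true $h$-dynamics. For $\cosh$ the pinchings are controlled by the postcritical set, which consists of the forward orbits of the critical values $\pm 1\in\R\subset A(h)$ (as observed in Section~\ref{sec:main}); one would argue that all resulting fibre identifications in $\hat{J}(g)$ lie within $A(g)$, leaving $\pi$ injective on $\hat{J}(g)\setminus A(g)$. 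Should this strong form of injectivity fail, the fallback is to define $\psi$ by selecting, for each $z\in\Ch\setminus A(h)$, the canonical representative of $\pi^{-1}(z)$ picked out by its external address; Pardo-Sim\'on's framework should render this selection continuous on the non-fast-escaping regime.

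The main obstacle, as I see it, is precisely this extraction of injectivity (or continuous-selection) on $\hat{J}(g)\setminus A(g)$ from Pardo-Sim\'on's framework, together with verifying continuity of the resulting $\psi$. Continuity does not follow from a standard compactness argument, since $\Ch\setminus A(h)$ is not compact; instead one must argue locally, using the explicit topological model to track preimages of $\pi$ continuously in a neighbourhood of each non-fast-escaping point. Once this is in place, surjectivity is automatic from the surjectivity of $\pi$, and the equivalence $\psi(z)=\infty\iff z=\infty$ is inherited directly from $\pi^{-1}(\infty)=\{\infty\}$.
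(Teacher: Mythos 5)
Your high-level plan is the right one and matches the paper's strategy: pull back through Pardo-Sim\'on's semiconjugacy, check that fast escape is preserved and reflected (your first step corresponds to Claim~1 in the paper's proof), and control the non-injectivity by showing that all identifications happen inside the fast escaping set (your pinching argument corresponds to Claim~2). However, there is a genuine structural gap at the very start. You posit a continuous surjective semiconjugacy $\pi\colon \hat{J}(g)\to\Ch$. No such map exists in the form you assume. What Pardo-Sim\'on's Theorem~6.5 in \cite{pardo-simon-splitting} provides is a map $\phi\colon J(g)_{\pm}\to\C$ defined on the \emph{doubled} space $J(g)_{\pm}=J(g)\times\{+,-\}$, equipped with a non-metrizable topology. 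The doubling is not cosmetic: since $J(h)=\C$ has nonempty interior while each hair of $J(g)$ is an arc, each hair must map to a ``two-sided'' hair of $h$, and $\phi(s\times\{+\})$ and $\phi(s\times\{-\})$ are in general \emph{different} curves in the plane. A continuous map directly from $J(g)$ to $\C$ respecting the dynamics cannot be defined without making a choice of side, and no such choice is continuous. Consequently, the $\psi$ of the proposition is not obtained by inverting a restriction of any $\pi\colon\hat J(g)\to\Ch$; rather, one locally inverts $\phi$ (permitted by the analogues of your two steps) and then composes with the projection $J(g)_{\pm}\to J(g)$. The resulting $\psi$ is surjective but need not be injective: the paper's Claim~2 only yields that $\phi(x)=\phi(y)$ with $\pi(x)\notin A(g)$ forces $\pi(x)=\pi(y)$, not $x=y$, so your bijectivity expectations for $\psi$, and the ``continuous selection'' fallback, are aimed at a stronger statement than is needed or true.

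On the continuity worry: you are right that $\Ch\setminus A(h)$ is not compact, but a compactness argument still goes through, applied at a different point. The one-point compactification $\hat{J}(g)_{\pm}$ is compact, and $\phi$ extends continuously to it. Given $z_n\to z$ in $\Ch\setminus A(h)$, one lifts $z_n=\phi(x_n)$, passes to an accumulation point $y$ of $(x_n)$ in $\hat{J}(g)_{\pm}$, notes $\phi(y)=z=\phi(x)$, and then applies Claim~2 together with continuity of the projection to conclude $\pi(y)=\pi(x)$. So no local argument or explicit coordinate tracking is required; the missing ingredient is precisely the compactness of the compactified doubled space, which your formulation omits by not working with $J(g)_{\pm}$.
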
       
\begin{proof}
 In~\cite[Definition~5.5]{pardo-simon-splitting}, 
   Pardo-Sim\'on introduces a space
   $J(g)_{\pm}=J(g)\times \{+,-\}$, with a certain topology that is 
   locally compact and Hausdorff, but not second countable and hence not metrizable.
   We do not require a full description of the topology, but rather will be using
   the following facts (compare Proposition~5.7 and Lemma~5.8 of \cite{pardo-simon-splitting}).
    \begin{enumerate}[(1)]
      \item The projection $\pi\colon J(g)_{\pm}\to J(g)$ is continuous.
      \item   The space $J(g)_{\pm}$ has a one-point compactification
          $\hat{J}(g)_{\pm}= \hat{J}(g)_{\pm}\cup\{\infty\}$, and $x_n\to\infty$ in $\hat{J}(g)_{\pm}$ if and only if $\pi(x_n)\to\infty$ in $\hat{J}(g)$. In particular, $\pi$ extends
        continuously to a map $\pi\colon \hat{J}(g)_{\pm}\to \hat{J}(g)$. 
    \end{enumerate}
   We define $\tilde{g}\colon J(g)_{\pm}\to J(g)_{\pm}; \ (z,\sigma)\mapsto (g(z),\sigma)$. Compare~\cite[p.13414]{pardo-simon-splitting}. 
   
  The function $h$ is \emph{strongly postcritically separated} in the sense 
   of~\cite[Definition~4.1]{pardo-simon-splitting} (see~\cite[p.~1813]{pardo-simon-orbifolds}). 
 By~\cite[Theorem~6.5]{pardo-simon-splitting}, 
  there is a continuous and surjective 
  function $\phi\colon J(g)_{\pm}\to \C$ with 
     $h\circ \phi = \phi\circ \tilde{g}$. 
     Moreover (see~\cite[Formula~(6.21)]{pardo-simon-splitting}),
     $\phi(x_n)\to\infty$ if and only if $x_n\to\infty$ in $J(g)_{\pm}$. In particular,
     $\phi$ extends continuously to a surjection 
     $\phi\colon \hat{J}(g)_{\pm} \to \Ch$ with $\phi(\infty)=\infty$. 

 \begin{claim}[Claim 1]
    Let $x\in J(g)_{\pm}$. Then $\phi(x)\in A(h)$ 
    if and only if $\pi(x)\in A(g)$. 
\end{claim}
\begin{subproof}
   By the discussion following formula~(6.15) on p.13420 of~\cite{pardo-simon-splitting}, the distance
    between the point $\phi(x)$ and
    the point $\pi(x)$ is uniformly bounded by a constant $C$ independent of $x$. 
     Here the distance is measured
     with respect to the hyperbolic metric on a certain hyperbolic orbifold~$\mathcal{O}$; see~\cite[\S4]{pardo-simon-splitting}. 
     We refer to~\cite[\S3]{pardo-simon-orbifolds} for background on hyperbolic orbifolds. 
  
 The underlying surface of the orbifold $\mathcal{O}$ is a subset of $\C$, and hence has at least a 
   puncture at $\infty$. (See~\cite[Theorem~4.6]{pardo-simon-splitting}.) Therefore the density $\rho_{\mathcal{O}}$ of the hyperbolic
   metric $\rho_{\mathcal{O}}(z)\lvert \deriv z\rvert$ of $\mathcal{O}$ satisfies 
     \begin{equation}\label{eqn:orbifoldmetric} \rho_{\mathcal{O}}(z) > \frac{1}{2\lvert z\rvert \log\lvert z\rvert} \end{equation}  for sufficiently large $z$. Indeed, consider the sub-orbifold $\tilde{\mathcal{O}}$ of $\mathcal{O}$ consisting of points
     of modulus greater than $1$. Then $\tilde{\mathcal{O}}$ is contained in $W\defeq \{\lvert z\rvert > 1\}$ (i.e., the inclusion map $\tilde{O}\to W$ is a holomorphic map between orbifolds; see \cite[Definition~3.2]{pardo-simon-orbifolds}). Hence 
     \[ \rho_{\tilde{\mathcal{O}}}(z) \geq \rho_W(z) = \frac{1}{\lvert z\rvert \log \lvert z\rvert} \]
     when $\lvert z\rvert > 1$ by Pick's theorem (see~\cite[Corollary~3.6]{pardo-simon-orbifolds}. Moreover, as $z\to\infty$, we have $\rho_{\tilde{\mathcal{O}}}(z)/\rho_{\mathcal{O}}(z)\to 1$;
     see~\cite[Theorem~1.5]{pardo-simon-orbifolds}.  This establishes~\eqref{eqn:orbifoldmetric}.
     
   It follows that 
   there exists a constant $\delta>1$ such that the hyperbolic distance in $\mathcal{O}$ between $\{\lvert z\rvert = R\}$ and $\{\lvert z\rvert = R^{\delta}\}$ 
   is greater than $C$, for sufficiently large $R$. (A simple calculation shows that we can take $\delta = \exp(2C)$.)  
   
    Applying this fact to the orbits $h^n(\phi(x)) = \phi(\tilde{g}^n(x))$ and $g^n(\pi(x)) = \pi(\tilde{g}^n(x))$, we see that 
      \[   \lvert g^n(\pi(x))\rvert^{\frac{1}{\delta}} < \lvert h^n(\phi(x))\rvert < \lvert g^n(\pi(x))\rvert^\delta \]
    for sufficiently large $n$. By Proposition~\ref{prop:exponentialgrowth}, 
   these sequences either both have iterated exponential growth, or neither does.
   The claim follows by Proposition~\ref{prop:classBgrowth}. \end{subproof} 
     
  In~\cite[\S5]{pardo-simon-cosine}, a complete description is given of when two points of $J(g)_{\pm}$ have the same image under
   $\phi$. We require the 
   following. 
  
  \begin{claim}[Claim~2]
   Suppose that $x,y\in J(g)_{\pm}$ satisfy $\phi(x)=\phi(y)$ and 
    $\pi(x)\notin A(g)$. Then $\pi(x)=\pi(y)$. 
  \end{claim}  
  \begin{subproof}
     Every connected component $s$ of $J(g)$ is an arc connecting a finite
      point $z_{s}\in s$ 
      to infinity~\cite[Theorem~1.3]{pardo-simon-cosine}; compare also~\cite[\S5]{aarts-oversteegen}
      and~\cite[\S2.1]{escapingsetsurvey}. The open arc 
     $\Gamma_s\defeq s \setminus\{z_{s}\}$ is called a
      \emph{dynamic ray} of $g$, and $z_{s}$ is called its \emph{endpoint}. 
      
     Then $\tilde{s} \defeq s \times \{\sigma\}$ is a connected component of $J(g)_{\pm}$ 
      for $\sigma\in \{+,-\}$. We call $\Gamma_{\tilde{s}} \defeq \gamma \times \{\sigma\}$ a 
       \emph{dynamic ray of $\tilde{g}$} and $z_{\tilde{s}}\defeq (z_s,\sigma)$ its endpoint. 

       The map $\phi$ is injective on $\tilde{s}$~\cite[Theorem~6.5]{pardo-simon-splitting}, 
       and the image $\phi(\Gamma_{\tilde{s}})$ is called 
       a ``canonical dynamic ray'' of $h$. 
      We say that the dynamic ray $\phi(\Gamma_{\tilde{s}})$ \emph{lands} at the point
        $\phi(z_{\tilde{s}})$. 
        Every point of $\C = J(h)$ is either on a canonical dynamic ray, or 
        a landing point of such a ray~\cite[Theorem~1.2]{pardo-simon-splitting}.

       Two canonical dynamic rays $\phi(\Gamma_{\tilde{s}_1})$ and $\phi(\Gamma_{\tilde{s}_2})$  
       of $h$ may coincide, but only if $\pi(\tilde{s}_1)=\pi(\tilde{s}_2)$ (i.e., 
       $\tilde{s}_1=s\times \{\sigma_1\}$ and $\tilde{s}_2 = s\times \{\sigma_2\}$ for some
       $\sigma_1,\sigma_2\in\{+,-\}$). 
        By~\cite[Proposition~5.8]{pardo-simon-cosine}, in the particular case of 
        $h=\cosh$, 
        no two different 
        canonical dynamic rays land at the same point. So we conclude that,
        if $\phi(x)=\phi(y)$ and $\pi(x)$ and $\pi(y)$ are endpoints of $J(g)$, then $\pi(x)=\pi(y)$. 
      
     It is known that all points on dynamic rays of $g$ escape at an iterated
     exponential rate, and hence belong to $A(g)$; 
     see~\cite[Proposition~4.3]{rottenfusser-schleicher}. If $\pi(x)\notin A(g)$ and $\phi(x)=\phi(y)$, then also $\pi(y)\notin A(g)$ by Claim~1. Hence both $\pi(x)$ and $\pi(y)$ are endpoints of $J(g)$, and the claim follows. 
     
      (We remark that a dynamic ray cannot land at a point that is itself on a ray, so if $\phi(x)=\phi(y)$, then $\pi(x)$ is an endpoint if and only if $\pi(y)$ is. See~\cite[Corollary~6.7]{rottenfusser-schleicher}. However, we do not require this fact.)
\end{subproof}

  Now we can complete the proof of Proposition~\ref{prop:Acoshcomplement}. It follows from Claims~1 and~2 that setting
    \[ \psi(\phi(x)) \defeq \pi(x) \]
   yields a well-defined surjective function
     \[ \psi \colon \Ch\setminus A(h) \to \hat{J}(g)\setminus A(g). \]

      We claim that $\psi$ is continuous. Continuity at $\infty$ is immediate from
      the continuity properties of $\phi$ and $\pi$ at infinity, noted above. Indeed,
      if $\phi(x_n)\to\infty$, then $x_n\to\infty$ and thus $\pi(x_n)\to\infty$. 

   Now suppose that $z_n = \phi(x_n)\to z$ in $J(h)\setminus A(h)$, and write
     $w_n = \psi(z_n) = \pi(x_n)$ and $w = \psi(z) = \pi(x)$. 
      Since $\hat{J}(g)$ is compact, we may suppose that
      $w_n\to w' \in \hat{J}(g)$, and must prove that $w=w'$. 
      
      By compactness, the sequence $(x_n)$ has an accumulation 
      point $y\in \hat{J}(g)_{\pm}$. By continuity of $\phi$, we see that
      $\phi(y)=z = \phi(x)$. We have $\pi(x)= \psi(z) \in J(g)\setminus A(g)$.
      By Claim~2 and continuity
      of $\pi$, we conclude that 
      $w' = \pi(y)=\pi(x)=w$, as claimed.  \end{proof}

 To complete the proof, we use the following observation, which is due to Rippon and Stallard~\cite[Theorem~2]{small-growth} for $I(f)$ and  Sixsmith~\cite[Theorem~1.5]{Sixsmith2020} for $A(f)$.
 \begin{thm}[Characterisation of an escaping spider's web]\label{thm:sixsmith}
  Let $f$ be a transcendental entire function. Then $A(f)$ (respectively $I(f)$) 
   is a spider's web if and 
   only if it separates some point of $J(f)$ from infinity. 
 \end{thm}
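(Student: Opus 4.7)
My plan is to prove the two directions separately. The \emph{only if} direction is immediate: if $A(f)$ (respectively $I(f)$) is a spider's web with exhausting simply connected domains $(G_n)$, then since $J(f)$ is nonempty for any transcendental entire function, we may pick any $z\in J(f)$; by the exhaustion property $z\in G_n$ for some $n$, and $\partial G_n\subset A(f)$ separates $z$ from $\infty$.

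For the \emph{if} direction, suppose $A(f)$ separates some $z_0\in J(f)$ from $\infty$. I would first let $U$ be the bounded component of $\C\setminus A(f)$ containing $z_0$ and let $G_0$ be its topological fill (the complement of the unbounded component of $\C\setminus\overline{U}$), giving a bounded simply connected domain with $\partial G_0\subset A(f)$ and $z_0\in G_0\cap J(f)$. The key tool is the blowing-up property of the Julia set at $z_0$: for every compact $K\subset\C$ not containing the (at most one) exceptional point of $f$, some iterate $f^n(G_0)$ contains $K$. For each $N\geq 1$ I choose $n_N$ so that $H_N\defeq f^{n_N}(G_0)$ contains a suitably large compact piece of $\overline{B(0,N)}$. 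Each $H_N$ is open, bounded and connected (as the image of the connected set $G_0$ under the non-constant holomorphic map $f^{n_N}$), and by forward invariance of $A(f)$ combined with the open mapping theorem one obtains $\partial H_N\subset f^{n_N}(\partial G_0)\subset A(f)$. The finite unions $W_N\defeq \bigcup_{k\leq N}H_k$ are then connected open bounded sets (consecutive $H_k$ overlap on the shared piece of $\overline{B(0,k)}$), and their outer fills $G_N\defeq \C\setminus V_N$---where $V_N$ is the unbounded component of $\C\setminus W_N$---are increasing bounded simply connected domains satisfying $\partial G_N\subset A(f)$ and $\bigcup_N G_N=\C$. The argument for $I(f)$ is identical, replacing $A(f)$ by $I(f)$ throughout and using its complete invariance.

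The remaining condition in the definition of spider's web is that $A(f)$ (respectively $I(f)$) must be \emph{connected}, and this is the main subtlety. I would invoke the theorem of Rippon and Stallard that every connected component of $A(f)$ (respectively $I(f)$) is unbounded: each $\partial G_N$ is a connected subset of $A(f)$ (as the boundary of a bounded simply connected planar domain), hence lies in a single component $C_N$ of $A(f)$; and any other component $C'$ must be unbounded yet contain a finite point $z\in G_M$ for large $M$, so its connectedness and unboundedness force $C'$ to meet $\partial G_M$, giving $C'=C_M$. All components therefore coincide. The primary obstacle is thus the appeal to the Rippon--Stallard unboundedness theorem; the topological construction of the $G_N$, though requiring care regarding simple connectivity and the possible exceptional point, is more routine.
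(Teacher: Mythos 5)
The paper does not actually prove Theorem~\ref{thm:sixsmith}: it is quoted as an external result, citing Rippon and Stallard~\cite[Theorem~2]{small-growth} for $I(f)$ and Sixsmith~\cite[Theorem~1.5]{Sixsmith2020} for $A(f)$. So there is no internal proof to compare against. That said, your attempt can be assessed on its own merits.

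Your ``only if'' direction is essentially correct, though you should pick $z\in J(f)\setminus A(f)$ (nonempty, since $J(f)$ contains repelling periodic points, which cannot be in $A(f)$); otherwise the chosen $z$ might itself lie in $A(f)$ and the meaning of ``separates $z$ from $\infty$'' becomes delicate. Your construction of the exhaustion in the ``if'' direction, via the blowing-up property of $J(f)$ and the forward invariance of $A(f)$ and $I(f)$, is sound in outline, modulo care about the possible Picard exceptional value, which you rightly flag.

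The genuine gap is in the connectedness argument for $I(f)$. You invoke ``the theorem of Rippon and Stallard that every connected component of $A(f)$ (respectively $I(f)$) is unbounded.'' For $A(f)$ this is indeed a theorem of Rippon and Stallard, and your argument then works. But for $I(f)$ the statement ``every component of $I(f)$ is unbounded'' is precisely Eremenko's conjecture. It was never a theorem, and it has in fact recently been shown to be false for general transcendental entire functions (Mart\'i-Pete, Rempe and Waterman). So you cannot use it, and your deduction that all components of $I(f)$ coincide collapses for the $I(f)$ case. The theorem itself remains true, so Rippon and Stallard must establish connectedness of $I(f)$ under the separation hypothesis by a different route (their argument in~\cite{small-growth} exploits structural results about $A(f)$ and about the set $I(f)\cup\{\infty\}$ rather than Eremenko's conjecture); you would need to supply such an argument to close the gap.
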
 

\begin{proof}[Proof of Theorem~\ref{thm:main}] 
 By Theorem~\ref{thm:disjointtypedisconnected}, the set $\hat{J}(g)\setminus A(g)$ is disconnected. 
   By Proposition~\ref{prop:Acoshcomplement}, this set is the continuous image of
   $\Ch\setminus A(\cosh)$, hence the latter set is also disconnected. 
   
  In particular, $A(\cosh)$ separates some point of
    $\C\setminus A(\cosh)\subset J(\cosh)$ from infinity. The claim now follows
    from Theorem~\ref{thm:sixsmith}.
\end{proof}

 \section{Further remarks and questions}\label{sec:generalisations}
  It is easy to see that $A(\cosh)$ has a dense path-connected
   subset, consisting of the real axis and its iterated preimages. In view of
   our result and the results of Pardo-Sim\'on, 
   it is plausible that, for $A(\cosh)$, the boundaries in Definition~\ref{defn:spidersweb} can be chosen
   to be Jordan curves. This would imply that 
   $A(\cosh)$ and $I(\cosh)$ are in fact path-connected; as far as we are aware,
   this would be the first example of a path-connected escaping set in the class $\classB$. 
   
 \begin{question}
   Are $I(\cosh)$ and $A(\cosh)$ path-connected?
\end{question}

 Our proof of Theorem~\ref{thm:main} 
 generalises immediately to a large class of transcendental entire functions
  with escaping critical values. Indeed, suppose that $f\in\classB$ is a function satisfying
  the following conditions.
   \begin{enumerate}[(1)]
    \item The function $f$ has finite order of growth; i.e.,~\eqref{eqn:order} holds for some $C$. 
       \label{item:finiteorder}
     \item $f$ has finitely many critical values and 
        no finite asymptotic values, and the degree of the critical points of $f$
       is uniformly bounded.\label{item:boundedcriticality}
     \item All critical points of $f$ belong to $A(f)$, and the 
         map $f$ is strongly post-critically separated in the sense of~\cite{pardo-simon-splitting}.\label{item:criticalfastescaping}
     \item No two dynamic rays of $f$ land at a common point.\label{item:rayslanding}
   \end{enumerate}
  \begin{remark}
    We refer to~\cite{pardo-simon-splitting} 
       for the definition of strongly post-critically separated maps, as well as that of
       dynamic rays and their landing points. 
  \end{remark}
  
  \begin{thm}[More fast escaping spiders' webs]\label{thm:general}
    Suppose that $f\in\classB$ satisfies conditions~\ref{item:finiteorder}--\ref{item:rayslanding}. Then $A(f)$ is a spider's web. 
  \end{thm}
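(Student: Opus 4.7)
The plan is to follow the proof of Theorem~\ref{thm:main} essentially verbatim, with $f$ playing the role of $\cosh$ and an auxiliary disjoint-type model $g$ playing the role of $\cosh/2$. Since $f\in\classB$ has finitely many critical values and no finite asymptotic values by~\ref{item:boundedcriticality}, and scaling by a factor $\lambda>0$ preserves the class and the order of growth, for all sufficiently small $\lambda>0$ the function $g\defeq \lambda f$ has all critical values in the immediate basin of an attracting fixed point at $0$ and is therefore of disjoint type. Applied to this $g$, Theorem~\ref{thm:disjointtypedisconnected} (whose proof uses only that $g$ is of disjoint type and finite order) gives that $\hat{J}(g)\setminus A(g)$ is disconnected.

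Next I would establish the analogue of Proposition~\ref{prop:Acoshcomplement}: a continuous surjection $\psi\colon \Ch\setminus A(f)\to \hat{J}(g)\setminus A(g)$ with $\psi^{-1}(\infty)=\{\infty\}$. Since $f$ is strongly post-critically separated of finite order and all its critical points lie in $A(f)$ (conditions~\ref{item:finiteorder} and~\ref{item:criticalfastescaping}), \cite[Theorem~6.5]{pardo-simon-splitting} applies and yields the topological space $J(g)_{\pm}$, the projection $\pi$, and the continuous surjection $\phi\colon \hat{J}(g)_{\pm}\to\Ch$ satisfying $f\circ\phi=\phi\circ\tilde{g}$, exactly as in the cosine case. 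The proof of Claim~1 in Proposition~\ref{prop:Acoshcomplement} uses only the uniform bound on the orbifold-hyperbolic distance between $\phi(x)$ and $\pi(x)$, the finite-order assumption, and Proposition~\ref{prop:classBgrowth}; none of these involves a cosine-specific feature, so it transfers to $f$ without change.

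The one place where the proof of Theorem~\ref{thm:main} genuinely used a cosine-specific fact is Claim~2, which invoked \cite[Proposition~5.8]{pardo-simon-cosine} to rule out two distinct canonical dynamic rays landing at a common point. In the present setting this statement has been axiomatised as condition~\ref{item:rayslanding}; combined with \cite[Proposition~4.3]{rottenfusser-schleicher}, which ensures that all points on dynamic rays of the disjoint-type map $g$ belong to $A(g)$, the same argument shows that $\phi(x)=\phi(y)$ and $\pi(x)\notin A(g)$ force $\pi(x)=\pi(y)$. With Claims~1 and~2 available, setting $\psi(\phi(x))\defeq \pi(x)$ produces a well-defined continuous surjection onto $\hat{J}(g)\setminus A(g)$ by the same topological argument as in Proposition~\ref{prop:Acoshcomplement}. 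Disconnectedness of $\hat{J}(g)\setminus A(g)$ then pulls back through $\psi$ to disconnect $\Ch\setminus A(f)$, and Theorem~\ref{thm:sixsmith} concludes that $A(f)$ is a spider's web.

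The main obstacle is bookkeeping rather than substance: conditions~\ref{item:finiteorder}--\ref{item:rayslanding} were designed precisely so that the quoted results of Pardo-Sim\'on, Evdoridou--Sixsmith and Rottenfusser--Schleicher go through for the abstract $f$, but each citation must be checked to confirm that nothing in those references implicitly restricts to $\cosh$ or to some other specific family, and that the disjoint-type model $g=\lambda f$ indeed satisfies the hypotheses of the semiconjugacy theorem in~\cite{pardo-simon-splitting}. I expect no new ideas to be required beyond this verification, so the proof should reduce to essentially repeating the proof of Theorem~\ref{thm:main} with the four hypotheses invoked at the appropriate points.
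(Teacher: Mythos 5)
Your proposal matches the paper's proof essentially verbatim: both take $g=\lambda f$ for $\lvert\lambda\rvert$ small, invoke \cite[Theorem~6.5]{pardo-simon-splitting} to obtain the semiconjugacy $\phi$, observe that Claim~1 of Proposition~\ref{prop:Acoshcomplement} transfers unchanged while conditions~\ref{item:criticalfastescaping} and~\ref{item:rayslanding} supply Claim~2, and conclude via Evdoridou--Sixsmith and Theorem~\ref{thm:sixsmith}. The only cosmetic slip is that you should cite \cite[Theorem~1.1(b)]{nonescapingendpoints} directly for the disconnectedness of $\hat{J}(g)\setminus A(g)$ rather than Theorem~\ref{thm:disjointtypedisconnected} (which as stated concerns only $\cosh/2$), as you in fact note parenthetically.
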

  \begin{proof} 
    The proof proceeds exactly as the proof of Theorem~\ref{thm:main}. 
    Indeed, Conditions~\ref{item:finiteorder}, 
    \ref{item:boundedcriticality} and~\ref{item:criticalfastescaping} imply that
    $f$ satisfies the assumptions of~\cite[Theorem~6.5]{pardo-simon-splitting}. 
    This yields a function $\phi\colon \hat{J}(g)_{\pm} \to J(f)$ as discussed in the proof
    of Proposition~\ref{prop:Acoshcomplement}, where      $g(z) = \lambda f(z)$, with
    $\lvert \lambda\rvert$ sufficiently small. Claim~1 in the proof of Proposition~\ref{prop:Acoshcomplement} holds with
    the same proof. 
    Properties~\ref{item:criticalfastescaping} and~\ref{item:rayslanding}
    ensure that Claim~2 also holds. Hence we conclude as before that there is 
    a continuous function $\phi\colon \Ch\setminus A(f)\to \hat{J}(g)\setminus A(g)$.
      
   By~\cite[Theorem~1.1(b)]{nonescapingendpoints}, 
     Condition~\ref{item:finiteorder} again implies that 
     $\hat{J}(g)\setminus A(g)$ is disconnected. So $\Ch\setminus A(f)$ is also disconnected, and
     $A(f)$ is a spider's web by Theorem~\ref{thm:sixsmith}.
  \end{proof}
     
  We may ask whether any of the conditions above can be omitted or 
   weakened. If $f$ has infinite order,
   the set $I(f)\setminus A(f)$ may contain curves to infinity; see Remark~(3) after 
   Theorem~5.2 of~\cite{devaneyhairsfastescaping}. Such an example may be chosen
   also to 
   satisfy condition~\ref{item:boundedcriticality}--\ref{item:rayslanding}, 
   using the techniques of Bishop~\cite{qcfolding}. 

   The function $f(z)=e^z$ satisfies all conditions except 
   for~\ref{item:boundedcriticality}, but it follows from~\cite{devaney}
   that $\C\setminus I(f)$ contains unbounded connected sets. (In fact, as 
   already mentioned in the
    introduction, if $f\in\classB$ has a finite asymptotic value, then $I(f)$ is not a spider's
    web by~\cite[Theorem~1.4]{Sixsmith2020}.)
    
  Finally, the function $g(z)=\cosh(z)/2$ 
    satisfies conditions~\ref{item:finiteorder}, \ref{item:boundedcriticality} 
    and~\ref{item:rayslanding}, but the Fatou set $F(f)$ is connected,
   and $A(f)$ and $I(f)$ have uncountably many connected components. 
   
  So conditions~\ref{item:finiteorder},~\ref{item:boundedcriticality} 
    and~\ref{item:criticalfastescaping} cannot be omitted entirely (although it is likely that some or all of them could be weakened). We do not
    know whether condition~\ref{item:rayslanding} can be omitted in Theorem~\ref{thm:general}. 
    
   \begin{question}
     Suppose that $f\in\classB$ satisfies~\ref{item:finiteorder},~\ref{item:boundedcriticality}
      and~\ref{item:criticalfastescaping}. Is $A(f)$ a spider's web? 
   \end{question}
   
  More generally, we may ask the following.
  
   \begin{question}
     Suppose that $f\in\classB$ has finite order of growth, has no finite asymptotic value 
      and finitely many critical values, and all critical values 
      belong to $A(f)$ (resp.\ $I(f)$). When is $A(f)$ (resp.\ $I(f)$)
      a spider's web? 
   \end{question}

\providecommand{\bysame}{\leavevmode\hbox to3em{\hrulefill}\thinspace}
\providecommand{\MR}{\relax\ifhmode\unskip\space\fi MR }
\providecommand{\MRhref}[2]{%
  \href{http://www.ams.org/mathscinet-getitem?mr=#1}{#2}
}
\providecommand{\href}[2]{#2}

    \end{document}